\newcommand{\norm}[1]{\left\lVert#1\right\rVert}
\def\maxbool{\mathrel{%
    \mathchoice{\QEQ}{\QEQ}{\scriptsize\QEQ}{\tiny\QEQ}%
}}
\def\QEQ{{%
    \setbox0\hbox{$\cup$}%
    \rlap{\hbox to \wd0{\hss $\text{ }\vee$ \hss}}\box0
}}
\def\minbool{\mathrel{%
    \mathchoice{\REQ}{\REQ}{\scriptsize\REQ}{\tiny\REQ}%
}}
\def\REQ{{%
\setbox0\hbox{$\cup$}%
    \rlap{\hbox to \wd0{\hss $\text{ }\wedge$ \hss}}\box0
}}
\author{
  Jorge Garza Vargas\\ UC Berkeley \\
  E-mail: jgarzavargas@berkeley.edu 
  \and
 Dan Virgil Voiculescu \footnote{Research supported in part by NSF grant DMS-166534.} \\ UC Berkeley \\
  E-mail: dvv@math.berkeley.edu 
}
\date{}
\title{\Large{\textbf{Boolean Extremes and Dagum Distributions}}}
\begin{document}
\maketitle

\newtheorem{theorem}{Theorem}[section]
\newtheorem{definition}{Definition}[section]
\newtheorem{lemma}{Lemma}[section]
\newtheorem{corollary}{Corollary}[section]
\newtheorem{proposition}{Proposition}[section]

\begin{abstract}
We study the max-convolution and max-stable laws for Boolean independence and prove that these are Dagum distributions (also known as log-logistical distributions).
\end{abstract}

\section{Introduction}

\let\thefootnote\relax\footnotetext{2010 Mathematics Subject Classification 46L53, 60G70.} \footnote{Key words: Boolean independence, log-logistic distributions, extreme values, max stable laws.} In extreme value theory, instead of the sum of independent random variables, one deals with their max.  In \cite{Arous-Voiculescu}, looking for an analogue of extreme value theory in free probability, the Ando max was used. The Ando max \cite{Ando} of two noncommutative random variables, i.e. hermitian operators, amounts to replacing the operators by the projection-valued process, indexed by $\mathbb{R}$, where to each $t\in \mathbb{R}$ we associate the spectral projection for $(-\infty, t]$ and take the min of the projections. 

In this note we want to pursue the same idea with Boolean independence \cite{Speicher}, \cite{Speicher-Woroudi}. In the free setting the free max-stable laws turned out to fall into 3 families \cite{Arous-Voiculescu} as in the classical framework. Actually, in the free case, the max-stable laws resulted in generalized Pareto laws, coinciding surprisingly with precisely those limit laws which were found in Peaks over Thresholds in \cite{Balkema} (concerning this coincidence see \cite{Grela-Nowak}). Due to special features of Boolean independence related to units, it turns out that in this framework the question should be naturally restricted to positive noncommutative random variables. Instead of three families of limit laws there is only one family of Boolean max-stable laws and again, like with free independence, these laws have also been found in the classical framework, they are Dagum laws (see \cite{Kleiber}), which appear typically in wealth distributions. 

In addition to this introduction, this paper has three more sections. In section 2 we give preliminaries about Boolean independence (see for instance \cite{Bercovici}, \cite{Chackraborty-Hazra}, \cite{Speicher}, \cite{Speicher-Woroudi}), our choices between equivalent approaches, the Ando max and a discussion about the reasons for restricting the study of Boolean max-convolution to non-negative variables (or some equivalent class). 

In section 3 we compute the distribution of the Ando max of Boolean independent projections and from this result develop the Boolean max-convolution. We show that the semigroup on $[0, 1]$ with the operation coming from operations on Boolean independent projections is isomorphic to $[0,1]$ endowed with multiplication, which is the semigroup underlying classical extreme value theory. This isomorphism will be of great help and should be compared to the fact that in the case of free extreme value theory \cite{Arous-Voiculescu} there are useful homomorphisms between the semigroups on $[0, 1]$, but not such a perfect fit as an isomorphism.   

Section 4 uses the results of the preceding section to determine the Boolean max-stable laws and their domain of attraction. In essence, in view of the isomorphism of semigroup laws on $[0, 1]$ found in section 3, this amounts to transfers of classical results to the Boolean setting, with some restrictions. The main restriction is that the laws should be supported on $[0, \infty)$ and this singles out Fr\'echet laws in the classical framework. When transferred to Boolean probability these become Dagum distributions, which are also called log-logistic distributions. Thus, also in the Boolean framework the max-stable laws have also a history in classical probability. 

Concluding, we would like to mention some of the developments around the free extreme values of \cite{Arous-Voiculescu} (see \cite{Georges-Duvillard}, \cite{Grela-Nowak}, \cite{Hazra-Maulik}) which suggest similar questions for Boolean extremes. 

\section{Preliminary Considerations}

The noncommutative probability framework we use, that of hermitian operators on a Hilbert space (actually their spectral scales), is equivalent to using von Neumann algebras, but since this is a quite short paper, this seemed the best for our purpose. 

Thus, noncommutative random variables will be hermitian operators $T= T^*$ on some Hilbert space $\mathcal{H}$, endowed with a state vector $\xi \in \mathcal{H}$, $\norm{\xi} =1$, which gives the expectation functional  $\phi(T) = \langle T\xi, \xi\rangle$. We can encode the same data as a projection-valued process $\mathbb{R}  \ni t \mapsto E(T; (-\infty, t])$, which to each $t$ associates the spectral projection of $T$ for $(-\infty, t]$. Here, $E(T; (-\infty, t])$ is an increasing function of $t$, which is right semicontinuous with respect to the strong operator topology. That $T$ is bounded means that there are $t_0 < t_1$ so that $E(T; (-\infty, t_0]) =0$ and $E(T; (-\infty, t_1]) = I$. This works also for unbounded selfadjoint variables which will correspond to scales of spectral projections $[-\infty, \infty] \ni t \mapsto E(T; (\-\infty, t])$, which are increasing, right continuous and $E(T; (-\infty, t])$ is equal to 0 if $t=-\infty$ and equal to $I$ if $t= \infty$. There is an additional condition at $+\infty$, $\lim_{t\to +\infty} E(T; (-\infty, t]) = I$, that is, continuity at +$\infty$. 

A noncommutative random variable $a$ that is given by such a  scale has distribution function $$F_a(t) = \langle E(a; (-\infty, t])\xi, \xi \rangle \hspace{.1cm}, \hspace{.1cm} t\in \mathbb{R}$$
and its distribution is the probability measure $\mu_a$ on $\mathbb{R}$ with this distribution function, that is, $\mu_a((-\infty, t]) = F_a(t)$. The spectral max $a\vee b$ of $a$ and $b$ is given by the scale $E(a\vee b; (-\infty, t]) = E(a; (-\infty, t])\wedge E(b; (-\infty, t])$ where for two hermitian projections $P, Q$ by $P\wedge Q$ we denote the orthogonal projection onto the intersection of their ranges 
$$(P\wedge Q) (\mathcal{H}) = P(\mathcal{H})\wedge Q(\mathcal{H}). $$ 
Boolean independence of two families of bounded noncommutative random variables $(X_{\iota})_{\iota \in I}, (Y_j)_{j \in J}$ on some $(\mathcal{H}, \xi)$ can be described as follows. There exists $(X'_\iota)_{\iota \in I}$ on $(\mathcal{H}', \xi')$, and $(Y_j'')_{j\in J}$ on $(\mathcal{H}'', \xi'')$, with the same noncommutative moments with respect to $\xi'$ and $\xi''$ respectively, such that if we consider $(\mathcal{H}''', \xi''')$, where $\mathcal{H}'''= (\mathcal{H}'\ominus \mathbb{C}\xi') \oplus (\mathcal{H}''\ominus \mathbb{C}\xi'') \oplus \mathbb{C}\xi'''$, and identifying isometries $$V': \mathcal{H}'\to \mathcal{H}''',  V'': \mathcal{H}''\to \mathcal{H}''',$$ so that $V'\restriction (\mathcal{H}' \ominus \mathbb{C}\xi')$ and $V''\restriction (\mathcal{H}''\ominus \mathbb{C}\xi'')$ are the identity in the respective space, while $V'\xi'= V''\xi''= \xi'''$. Then, the joint distribution of $(X_\iota, Y_j, \iota\in I, j \in J)$ on $(\mathcal{H}, \xi)$ is the same as the joint distribution of $(\tilde{X_\iota}, \tilde{Y_j}, \iota \in I, j \in J )$ on $(\mathcal{H}''', \xi''')$, where $\tilde{X_\iota} = V'X_\iota' V'^*$ and $\tilde{Y_j} = V''X_j'' V''^*$. One feature of this construction is that if we construct Boolean independent algebras on $(\mathcal{H}''', \xi''')$ from $\mathcal{A}$ on $(\mathcal{H}', \xi')$ and $\mathcal{B}$ on $(\mathcal{H}'', \xi'')$, then, $V'\mathcal{A}V'^*$ and $V''\mathcal{B}V''^*$ do not contain the identity operator on $\mathcal{H}'''$, even if $\mathcal{A} \ni I_{\mathcal{H}'}$, $\mathcal{B} \ni I_{\mathcal{H}''}$. The same works for unbounded noncommutative random variables by using their spectral scales instead of the operators, but it must be noted that spectral scales from $\mathcal{H}', \mathcal{H}''$ to $\mathcal{H}'''$ do not transform by simply putting the $\sim$ on the projections. Actually we have the following:
\begin{itemize}
\item If $t < 0$ then 
$$E(\tilde{X'}, (-\infty, t]) = V'E(X'; (-\infty, t]) V'^*.$$
\item While if $t\geq 0$ then 
$$E(\tilde{X'}; (-\infty, t]) = V' E(X'; (-\infty, t])V'^*+P_{\mathcal{H}''\ominus \mathbb{C}\xi'''}.$$
\end{itemize} 
Similar formulae hold for the $\tilde{Y''}$. Clearly, if $X' \geq 0$ then the formulae are uniform. Also if $P', P''$ are projections on $\mathcal{H}', \mathcal{H}''$ then $\tilde{P'}\wedge \tilde{P''} \leq P_{\mathbb{C}\xi}$ so $\tilde{P'}\wedge \tilde{P''}$ is either 0 or $P_{\mathbb{C}\xi}$ and $\langle \tilde{P'} \wedge \tilde{P''}\xi, \xi \rangle \in \{0, 1\}$. On the other hand $(\tilde{P'}+P_{\mathcal{H}'\ominus \mathbb{C}\xi'}) \wedge (\tilde{P''}+P_{\mathcal{H}''\ominus \mathbb{C}\xi''})$ is no longer such a trivial quantity. 

Also observe that the joint distribution of the $(\tilde{X_{\iota}'}, \tilde{Y_j''}, \iota\in I, j\in J)$ is the same for all choices of $X'_\iota$, $Y_j''$ replicating our initial joint distribution. 

The conclusion of this discussion is that if we have two noncommutative random variables, replacing them with Boolean independent $\tilde{X}, \tilde{Y}$ ``copies in distribution" on another Hilbert space, the spectral max works well for nonnegative random variables $X\geq 0, Y \geq 0$ when we can choose $\tilde{X}\geq 0, \tilde{Y}\geq 0$. 

In the following, we will use the Boolean additive convolution (\cite{Bercovici}, \cite{Speicher-Woroudi}). If $X, Y$ are bounded noncommutative random variables which are Boolean independent, then the distribution of $X+Y$ can be computed out of the distribution of $X$ and $Y$ by Boolean convolution. The formula for computing this is as follows. If $\mu$ is a probability measure on $\mathbb{R}$ with compact support, let $G_\mu(z) = \int\frac{d \mu(t)}{z-t}$  be its Cauchy transform, where $z\in \mathbb{C}\setminus \text{supp}(\mu)$, and let 
$$K_\mu(z) = -\frac{1}{G_\mu(z)}+z. $$
Then the equality
$$K_{\mu_{X+Y}} = K_{\mu_X}+K_{\mu_Y},$$
allows to recover $G_{\mu_{X+Y}}$ from which $\mu_{X+Y}$ is obtained, by a standard procedure, using boundary values of the imaginary part from the upper half plane.

\section{The spectral max of Boolean independent projections and Boolean max convolution}

In view of the discussion in the preceding section, if $P$ is a projection, its distribution function is given by
$$F_P(t) = \begin{cases}0 & \text{if } t< 0 \\ p & \text{if } 0\leq t < 1 \\ 1 & \text{if } 1\leq t \end{cases}$$
where $p = \phi(I-P) = 1-\langle P\xi, \xi \rangle= \langle (I-P)\xi, \xi\rangle$. To find the formulae for Boolean max convolution we need to find these formulae for projections and this in turn can be done using additive Boolean convolution. 

\begin{lemma}
\label{lemmaphisup}
Let $P, Q$ be Boolean independent projections on $(\mathcal{H}, \xi)$ so that $\phi(P) = 1-p$, $\phi(Q) = 1-q$. Then $\phi (P\vee Q) = 1-r$ where 
$$r^{-1}-1 =(p^{-1}-1)+(q^{-1}-1).$$
\end{lemma}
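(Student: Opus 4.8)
The plan is to reduce this computation about the spectral max $\vee$ to one about an atom of a Boolean \emph{additive} convolution. First I would unwind the definition of the spectral max for projections. Since $P$ and $Q$ are projections, their spectral scales satisfy $E(P; (-\infty, t]) = I - P$ and $E(Q; (-\infty, t]) = I-Q$ for $0 \leq t < 1$ (and are $0$ below $0$, equal to $I$ above $1$). By the definition $E(P\vee Q; (-\infty, t]) = E(P; (-\infty,t])\wedge E(Q;(-\infty,t])$, the scale of $P\vee Q$ equals $(I-P)\wedge(I-Q)$ on $[0,1)$, so $P\vee Q$ is again a projection and $I - (P\vee Q) = (I-P)\wedge(I-Q)$. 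Consequently $r = 1 - \phi(P\vee Q) = \phi\big((I-P)\wedge(I-Q)\big)$.

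The key observation, which converts the problem about $\vee$ into one about $+$, is that $(I-P)\wedge(I-Q)$ is exactly the eigenvalue-$0$ spectral projection of the sum $P+Q$. Indeed, since $P,Q\geq 0$, we have $\ker(P+Q) = \ker P \cap \ker Q = \operatorname{ran}(I-P)\cap\operatorname{ran}(I-Q)$, which is the range of $(I-P)\wedge(I-Q)$. Hence $r$ equals the mass $\mu_{P+Q}(\{0\})$ of the atom of the distribution of $P+Q$ at $0$, and because $P,Q$ are Boolean independent this distribution is computed by the Boolean additive convolution through the identity $K_{\mu_{P+Q}} = K_{\mu_P} + K_{\mu_Q}$.

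Then I would carry out the computation. The distribution of $P$ is $\mu_P = p\,\delta_0 + (1-p)\,\delta_1$, so a direct calculation gives $G_{\mu_P}(z) = (z-p)/\big(z(z-1)\big)$ and therefore $K_{\mu_P}(z) = z - 1/G_{\mu_P}(z) = (1-p)z/(z-p)$, and likewise for $Q$. Adding the $K$-transforms yields $G_{\mu_{P+Q}}(z) = 1/\big(z - K_{\mu_P}(z) - K_{\mu_Q}(z)\big)$, and I would extract the atom at $0$ via the residue $r = \mu_{P+Q}(\{0\}) = \lim_{z\to 0} z\,G_{\mu_{P+Q}}(z)$. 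Factoring $z$ out of the denominator gives $z - K_{\mu_P}(z) - K_{\mu_Q}(z) = z\big(1 - \tfrac{1-p}{z-p} - \tfrac{1-q}{z-q}\big)$, and evaluating the bracket at $z=0$ produces $r^{-1} = p^{-1} + q^{-1} - 1$, which is precisely the claimed identity $r^{-1}-1 = (p^{-1}-1) + (q^{-1}-1)$.

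The manipulations after the reduction are routine rational-function algebra, so the crux is the identification $r = \mu_{P+Q}(\{0\})$: one must verify carefully that positivity forces $(I-P)\wedge(I-Q)$ to be genuinely the eigenvalue-$0$ spectral projection of $P+Q$, and that the residue formula legitimately reads off the atom at the bottom of the support. I would also be careful to apply Boolean independence to $P$ and $Q$ themselves, whose distributions are the given data, rather than to $I-P$ and $I-Q$, whose Boolean independence is not granted since the unit is excluded from the Boolean subalgebras.
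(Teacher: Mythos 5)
Your proposal is correct and follows essentially the same route as the paper: identifying $1-\phi(P\vee Q)$ with the mass of the atom at $0$ of $P+Q$ (via $\ker(P+Q)=\ker P\cap\ker Q$), computing that atom through the Boolean additive convolution identity $K_{\mu_{P+Q}}=K_{\mu_P}+K_{\mu_Q}$, and extracting it as $\lim_{z\to 0} z\,G_{\mu_{P+Q}}(z)$. Your write-up actually makes explicit the positivity argument behind the identification of spectral projections, which the paper states tersely, but the substance and the computations are identical.
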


\begin{proof}
We have $E(P\wedge Q; \{0\}) = E(P+Q; \{0\})$ and $\phi(E(P+Q); \{0\})$ will be computed using Boolean additive convolution. We have 
$$G_P(z)= \frac{p}{z}+\frac{1-p}{z-1} = \frac{z-p}{z(z-1)}$$
$$K_P(z) = -\frac{z(z-1)}{z-p}+z = \frac{z(1-p)}{z-p}$$
and similar formulae for $Q$. 
$$K_{P+Q}(z) = z \left( \frac{1-p}{z-p}+\frac{1-q}{z-q} \right).$$
Then we have 
$$G_{P+Q}(z) = \frac{1}{z-K_{P+Q}(z)} = \frac{1}{z}\cdot \frac{1}{1-\frac{1-p}{z-p}-\frac{1-q}{z-q}}.$$
Hence 
$$r= \lim_{z\to 0} z G_{P+Q}(z) = \lim_{z\to 0} \frac{1}{1-\frac{1-p}{z-p}-\frac{1-q}{z-q}} = \frac{1}{p^{-1}+q^{-1}-1}.$$
This implies 
$$r^{-1}-1 = (p^{-1}-1)+(q^{-1}-1).$$
\end{proof}
The preceding lemma defines a semigroup structure on $[0, 1]$, the operation of which we shall denote by $\minbool$, the symbol which should correspond to Boolean min convolution, which we will  consider only as an operation on $[0,1]$. So if $x, y \in [0, 1]$ then $(x\minbool y)^{-1}-1 = (x^{-1}-1)+(y^{-1}-1)$. Since, when dealing with classical independence the semigroup structure at which one arrives is multiplication $\cdot$ on $[0, 1]$, it is clearly important to compare the two. As the next lemma shows the result of the comparison is quite pleasing. 
\begin{lemma}
\label{lemmachiiso}
The semigroups $([0, 1], \minbool)$ and $([0, 1], \cdot)$ are isomorphic. The map $\chi: [0, 1] \to [0, 1]$ given by $$\chi(x) = \exp(1-x^{-1})$$ is an isomorphism which is also an order preserving homeomorphism. The inverse isomorphism, which is also order-preserving is given by $$\chi^{-1}(y) =(1-\log(y))^{-1}.$$
\end{lemma}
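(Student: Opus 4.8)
The plan is to factor the desired isomorphism through the additive semigroup $([0,\infty], +)$, to which both $([0,1], \minbool)$ and $([0,1], \cdot)$ visibly resolve. Concretely, I would introduce the two coordinate changes $g(x) = x^{-1} - 1$ and $h(y) = -\log y$, each mapping $[0,1]$ onto $[0,\infty]$ with the endpoint conventions $g(0) = h(0) = \infty$ and $g(1) = h(1) = 0$. The operation $\minbool$ is defined precisely so that $g(x \minbool y) = g(x) + g(y)$, so $g$ is a semigroup homomorphism from $([0,1], \minbool)$ to $([0,\infty], +)$; similarly $\log(y_1 y_2) = \log y_1 + \log y_2$ gives $h(y_1 y_2) = h(y_1) + h(y_2)$, so $h$ is a homomorphism from $([0,1], \cdot)$ to $([0,\infty], +)$. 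Both $g$ and $h$ are strictly decreasing continuous bijections of $[0,1]$ onto $[0,\infty]$, hence order-reversing homeomorphisms, and therefore isomorphisms of topological semigroups.

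Granting this, I would set $\chi = h^{-1} \circ g$. As a composition of two order-reversing homeomorphisms it is an order-preserving homeomorphism of $[0,1]$, and as a composition of two semigroup isomorphisms through the common model $([0,\infty], +)$ it is an isomorphism from $([0,1], \minbool)$ onto $([0,1], \cdot)$. Unwinding the definition, $h^{-1}(u) = \exp(-u)$, so $\chi(x) = \exp(-g(x)) = \exp(1 - x^{-1})$, and inverting $\log \chi(x) = 1 - x^{-1}$ yields $\chi^{-1}(y) = (1 - \log y)^{-1}$, matching the claimed formulas.

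Alternatively, and perhaps more transparently for the reader, I would verify the homomorphism property by direct substitution: from $(x \minbool y)^{-1} = x^{-1} + y^{-1} - 1$ one gets $1 - (x \minbool y)^{-1} = (1 - x^{-1}) + (1 - y^{-1})$, and exponentiating gives $\chi(x \minbool y) = \chi(x)\chi(y)$ at once; the formula for $\chi^{-1}$ is then checked by composing the two expressions. Monotonicity and continuity of $x \mapsto \exp(1 - x^{-1})$ on $(0,1]$ are immediate from those of $x \mapsto 1 - x^{-1}$ and $\exp$, and likewise for $\chi^{-1}$ on $(0,1]$.

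The only genuine care required is at the boundary. Because $g$ and $\log$ blow up at $0$, I would check separately that the endpoint assignments $\chi(0) = 0$ and $\chi(1) = 1$ are forced, both by continuity, since $\exp(1 - x^{-1}) \to 0$ as $x \to 0^+$, and by the semigroup structure, since $0 \minbool x = 0$ corresponds to $0 \cdot \chi(x) = 0$, and that $\chi$ remains a homeomorphism up to and including these endpoints of the compact interval $[0,1]$. This bookkeeping at $0$ and $1$ is the main, and essentially the only, obstacle; everything else is a one-line computation.
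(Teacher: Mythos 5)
Your proof is correct, and it fills in precisely the computation the paper leaves out: the paper states that the proof of this lemma ``is straightforward and will be omitted,'' so there is no written argument to compare against. Your direct verification --- rewriting the defining relation as $1-(x\minbool y)^{-1}=(1-x^{-1})+(1-y^{-1})$, exponentiating to get $\chi(x\minbool y)=\chi(x)\chi(y)$, and checking the endpoints $0$ and $1$ by continuity --- is exactly the routine check the authors had in mind, and your structural framing through $([0,\infty],+)$ via $g(x)=x^{-1}-1$ and $h(y)=-\log y$ is a clean way to organize it (it also makes the order-preservation transparent, as a composition of two order-reversing maps).
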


The proof of the previous lemma is straightforward and will be omitted.

From Lemma \ref{lemmaphisup} and the discussion in Section 2, the following definition is natural.

\begin{definition}
If $F_1, F_2$ are distribution functions on $[0, \infty)$ then their Boolean max-convolution is defined by 
$$(F_1 \maxbool F_2)(t) = F_1(t) \minbool F_2(t).$$
\end{definition}

\begin{lemma}
\label{lemmamaxbooldist}
If $X\geq 0, Y \geq 0$ are Boolean independent noncommutative random variables on $(\mathcal{H}, \xi)$ then 
$$F_{X \vee Y} (t) = F_X \maxbool F_Y(t). $$
The noncommutative random variables may be unbounded and in such case they are identified with their spectral scales.  
\end{lemma}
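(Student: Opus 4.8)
The plan is to fix $t$ and reduce the identity to the projection computation of Lemma~\ref{lemmaphisup}, exploiting that the spectral max is defined levelwise through a meet of spectral projections. For $t<0$ both sides vanish, since $X,Y\geq 0$ force $F_X(t)=F_Y(t)=0$, and under the convention governing $\minbool$ one has $0\minbool 0 = 0$; I would dispose of this range immediately and assume $t\geq 0$.

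For fixed $t\geq 0$ I would work in the Boolean independent model $(\mathcal{H}''',\xi''')$ of Section~2, in which $X,Y$ are represented by nonnegative $\tilde{X}=V'X'V'^*$ and $\tilde{Y}=V''Y''V''^*$. By the definition of the spectral max and the invariance of the joint distribution,
$$F_{X\vee Y}(t) = \langle\big(E(\tilde{X};(-\infty,t])\wedge E(\tilde{Y};(-\infty,t])\big)\xi''',\xi'''\rangle.$$
The key step is to pass to complements. I would set $R=E(\tilde{X};(t,\infty))$ and $S=E(\tilde{Y};(t,\infty))$, so that $E(\tilde{X};(-\infty,t])=I-R$, $E(\tilde{Y};(-\infty,t])=I-S$, and hence
$$F_{X\vee Y}(t)=\phi\big((I-R)\wedge(I-S)\big).$$
Using the explicit spectral scale transformation formulae of Section~2 together with $X,Y\geq 0$ (so that those formulae hold uniformly in $t$), I would check that $R$ and $S$ are the plain conjugates $V'\big(I-E(X';(-\infty,t])\big)V'^*$ and $V''\big(I-E(Y'';(-\infty,t])\big)V''^*$; being spectral projections of the Boolean independent variables $\tilde{X},\tilde{Y}$, they are themselves Boolean independent projections, with $\phi(R)=1-F_X(t)$ and $\phi(S)=1-F_Y(t)$.

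I would then apply Lemma~\ref{lemmaphisup} to the Boolean independent projections $R,S$. Writing $\phi(R)=1-p$ and $\phi(S)=1-q$ gives $p=F_X(t)$ and $q=F_Y(t)$, and the quantity $r$ produced by that lemma is exactly $r=\phi(E(R+S;\{0\}))=\phi\big((I-R)\wedge(I-S)\big)=F_{X\vee Y}(t)$, since $E(R+S;\{0\})$ is the projection onto the common kernel of $R$ and $S$. Consequently
$$F_{X\vee Y}(t)^{-1}-1=\big(F_X(t)^{-1}-1\big)+\big(F_Y(t)^{-1}-1\big),$$
which, by the definition of $\minbool$ and of the Boolean max-convolution, is precisely $F_{X\vee Y}(t)=(F_X\maxbool F_Y)(t)$. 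The degenerate values $F_X(t),F_Y(t)\in\{0,1\}$ are consistent with this inverse relation under the same conventions, and the unbounded case requires no modification because the spectral scale formulae invoked are identical.

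The main obstacle I anticipate is the complement step: verifying that the somewhat awkward spectral projections $E(\tilde{X};(-\infty,t])$, which in the model carry an extra summand supported on the other factor's Hilbert space, become after complementation genuine Boolean independent projections of the simple conjugated form to which Lemma~\ref{lemmaphisup} directly applies. This is exactly where the standing hypothesis $X,Y\geq 0$ is essential, since only then do the Section~2 transformation formulae hold uniformly in $t$ and yield nonnegative copies with a well-behaved spectral max.
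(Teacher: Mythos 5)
Your proof is correct and takes essentially the same route as the paper's: the paper's one-line proof likewise reduces the identity at each level $t\geq 0$ to Lemma~\ref{lemmaphisup} together with the Section~2 discussion of spectral scales of nonnegative Boolean independent variables. The complementation step you work out --- showing $E(\tilde{X};(t,\infty))$ and $E(\tilde{Y};(t,\infty))$ are plain conjugates $V'E(X';(t,\infty))V'^*$, $V''E(Y'';(t,\infty))V''^*$, hence Boolean independent projections to which Lemma~\ref{lemmaphisup} applies with $p=F_X(t)$, $q=F_Y(t)$ --- is precisely the detail the paper compresses into ``the lemma follows immediately from the preceding results.''
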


\begin{proof}
Since $X\vee Y$ amounts to $E(X\vee Y; [0, t])= E(X; [0, t])\wedge E(Y; [0, t])$ where $t\geq 0$, $X\geq 0$, $Y\geq 0$ and $E(X; [0, t])$ , $E(Y; [0, t])$ the lemma follows immediately from the preceding results. 
\end{proof}

\section{The transfer from classical to Boolean of max convolutions}

Let $\Delta_+$ denote the set of distribution functions for positive random variables, that is, functions $F: [0, \infty)\to [0,1]$ which are increasing, right continuous and which satisfy $\lim_{t\uparrow \infty} F(t)=1$. On $\Delta_+$ we consider the classical max convolution, which is just the multiplication $(F,G)\mapsto FG$ and the Boolean max convolution $(F, G) \mapsto F\maxbool G$ which for each $t\in [0, \infty)$ is given by $(F\maxbool G)(t) = F(t)\minbool G(t)$. In view of Lemma \ref{lemmachiiso}, the map $X: \Delta_+ \to \Delta_+$, where $X(F)= \chi \circ F$, or in other words, $X(F) = \exp(1-F^{-1})$, is an isomorphism. Hence, it is a bijection, preserves pointwise convergences and for every $F$ and $G$ it satisfies
$$X(F)\cdot X(G) = X(F\maxbool G).$$
The inverse map $X^{-1}: \Delta_+\to \Delta_+$ is given by $X^{-1}(F) = (1-\log(F))^{-1}$. This map has the property that $X^{-1}(F\cdot G) = X^{-1}(F) \maxbool X^{-1}(G)$. 

In addition to these properties we also have that replacing $F(t)$ by $F(at)$ for some $a > 0$ corresponds to the same operation on $X(F)$ or $X^{-1}(F)$, that is 
$$X(F(a \cdot )) = (X(F))(a\cdot )\hspace{.1cm} \text{ and } \hspace{.1cm} X^{-1}(F(a \cdot)) = (X^{-1}(F)) (a \cdot).$$
In view of the fact that this are the operations which we use to define max stable laws, whether classical or Boolean, we can use this correspondence to transfer classical results to Boolean results, the main restriction being that the spectral max on the max-convolution when we deal with Boolean independence will be restricted to nonnegative random variables, or equivalently, we will be restricted to distribution functions in $\Delta_+$. 

The definition of max-stable laws for the Boolean setting will not include shifts in the limit process, because this would mean using the ``constant" noncommutative random variable $I$ on $(\mathcal{H},\xi)$ and we prefer to avoid the technical problems that arise from considering Boolean independent copies of it. The downside of this will be that we will have to keep in mind which shifted limit laws may arise. 

\begin{definition}
A distribution function $F\in \Delta_+$ is a Boolean max-stable distribution function if for some $G\in \Delta_+$ there are constants $a_n > 0$, $n\in \mathbb{N}$ so that 
$$(\underbrace{G\maxbool \cdots \maxbool G}_{n})(a_nt) \to F(t),$$
for all $t\geq 0$. 
\end{definition}

The facts related to the transfer that were previously discussed yield the following result.

\begin{theorem}
A distribution $F\in \Delta_+$ is Boolean max-stable if and only if $X(F) (t) = \exp (-\lambda t^{-\alpha})$ for some $\lambda > 0$ and $\alpha > 0$. 
\end{theorem}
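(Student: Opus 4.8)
The plan is to push the entire statement through the isomorphism $X$ and then read off the answer from classical extreme value theory. First I would rewrite the defining convergence $(\underbrace{G \maxbool \cdots \maxbool G}_n)(a_n t) \to F(t)$ by applying $\chi$ to both sides. Using the three properties of $X$ recorded just before the theorem---that $X$ carries $\maxbool$ to pointwise multiplication, so $X(\underbrace{G \maxbool \cdots \maxbool G}_n) = (X(G))^n$; that $X$ commutes with the dilation $t \mapsto a_n t$; and that $\chi$, hence $X$, is a homeomorphism and therefore preserves pointwise limits---this turns the condition into $(X(G))^n(a_n t) \to X(F)(t)$ for all $t \geq 0$. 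Since $X$ is a bijection of $\Delta_+$, writing $H = X(G)$ and $K = X(F)$ I conclude that $F$ is Boolean max-stable if and only if $K \in \Delta_+$ arises as a limit $H^n(a_n t) \to K(t)$ of i.i.d.\ maxima, normalized by pure scaling (no location shift) and supported on $[0, \infty)$.

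Next I would identify such $K$ by classical means. The convergence-to-types theorem of Khinchin shows that, for each fixed $k$, comparing $H^{nk}(a_{nk} t) \to K(t)$ with $(H^n(a_n \cdot))^k = H^{nk}(a_n \cdot) \to K^k$ forces $a_n/a_{nk}$ to converge to some constant and yields the pure-scaling self-stability relation $K^k(\tilde a_k t) = K(t)$ for every $k$, with $\tilde a_k > 0$. Writing $\psi = -\log K$, this reads $k\, \psi(\tilde a_k t) = \psi(t)$; since $\psi$ is a nonincreasing function running from $+\infty$ down to $0$ on $(0,\infty)$, its only monotone solutions are the power laws $\psi(t) = \lambda t^{-\alpha}$ with $\lambda, \alpha > 0$ (and $\tilde a_k = k^{1/\alpha}$). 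Equivalently, among the three Fisher--Tippett--Gnedenko types the pure-scaling constraint rules out the Gumbel type (stable only under translation), while membership in $\Delta_+$ rules out the Weibull type (supported on a half-line $(-\infty, t_0]$), leaving precisely the Fr\'echet family $K(t) = \exp(-\lambda t^{-\alpha})$. This is the asserted form of $X(F)$.

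For the reverse implication I would simply verify that every $K$ of this form is self-stable: $K^n(a_n t) = \exp(-n\lambda (a_n t)^{-\alpha}) = K(t)$ for $a_n = n^{1/\alpha}$, so taking $G = F$ and these $a_n$ the defining convergence holds as a constant sequence, and $F = X^{-1}(K)$ is Boolean max-stable; no classical input is needed here. I would also flag the standard convention that ``max-stable'' refers to a \emph{nondegenerate} limit: a point-mass $G$ produces a step-function limit whose image under $X$ is again a step function, never of the stated continuous form, so such degenerate limits are understood to be excluded from the statement.

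The step I expect to carry the real weight is the middle one: establishing pure-scaling self-stability of $K$ from the bare convergence and then solving the resulting functional equation, together with the bookkeeping that the two constraints---scaling-only normalization and $[0,\infty)$ support---single out the Fr\'echet type and no other. Everything Boolean-specific has already been absorbed into the properties of $X$ proved in the preceding section, so the only genuine content left is this classical convergence-to-types argument, which can be quoted rather than reproved.
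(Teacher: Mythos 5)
Your proposal is correct and takes essentially the same approach as the paper: transfer the defining convergence through $X$, invoke classical extreme value theory (convergence to types and the classification of max-stable laws, Resnick Prop.\ 0.3) to identify $X(F)$ as an unshifted Fr\'echet law, and verify the converse directly---your functional-equation step $k\,\psi(\tilde a_k t)=\psi(t)$ is the same mechanism by which the paper forces the shift $\mu$ in $\exp(-\lambda(t-\mu)^{-\alpha})$ to vanish. Two incidental points in your favor: your normalization $a_n=n^{1/\alpha}$ in the converse is the correct one (the paper's $a_n=n^{-1/\alpha}$ is a typo, since $H^n(n^{-1/\alpha}t)=\exp(-n^2\lambda t^{-\alpha})$), and your explicit exclusion of degenerate limits makes precise a nondegeneracy convention that the paper inherits silently from the classical result it cites.
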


\begin{proof}
Suppose $G^{\maxbool n} (a_n \cdot ) \to F(\cdot)$ and let $K = X(G)$ and $H= X(F)$. It follows that $K^n(a_n\cdot ) \to H(\cdot)$. Hence $H(\cdot)$ is classical max-stable and its support is in $[0, \infty)$. So $H(t) = \exp(-\lambda(t-\mu)^{-\alpha})$ if $x\geq \mu$ where $\mu \geq 0$, $\lambda > 0$, $\alpha > 0$ and $H\restriction_{[0, \mu]} = 0$ by the classical result of extreme value theory (see \cite{Resnick}, Prop. 0.3.). By the proof of Prop. 0.3. in \cite{Resnick} we must have $H^n(t) = H(n^{-\theta} t)$ for some $\theta \in \mathbb{R}$. This implies that $n(t-\mu)^{-\alpha} = (n^{-\theta} t -\mu)^{-\alpha}$, that is $n^{-\frac{1}{\alpha}}t-n^{-\frac{1}{\alpha}} \mu= n^{-\theta}t-\mu$ so that we must have $\mu=0$. 

On the other hand, if $H(t) = \exp (-\lambda t^{-\alpha})$, $H(t) = \lim_{n\to \infty} K^n(a_nt)$ for some $K$ and $a_n$'s, for instance $K=H$ and $a_n = n^{-\frac{1}{\alpha}}$. 
\end{proof}

Using $X^{-1}$ we get from this the Boolean max-stable laws explicitly. 

\begin{corollary}
\label{Dagum}
$F\in \Delta_+$ is a Boolean max-stable law if and only if $$F(t) = (1+\lambda t^{-\alpha})^{-1}$$ where $\lambda > 0$ and $\alpha > 0$. 
\end{corollary}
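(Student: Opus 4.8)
The plan is to obtain the corollary by transporting the characterization of the preceding theorem through the inverse isomorphism $X^{-1}$. The theorem tells us that $F \in \Delta_+$ is Boolean max-stable precisely when $X(F)(t) = \exp(-\lambda t^{-\alpha})$ for some $\lambda, \alpha > 0$. Since $X$ is a bijection of $\Delta_+$ onto itself, the condition on $F$ is equivalent to $F = X^{-1}(H)$ where $H(t) = \exp(-\lambda t^{-\alpha})$ ranges over exactly this family as $\lambda, \alpha$ vary. Thus the entire content of the corollary is the explicit evaluation of $X^{-1}$ on $H$, and essentially no new argument beyond the theorem is needed.

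First I would recall the closed form $X^{-1}(H)(t) = (1 - \log H(t))^{-1}$ established in the discussion following Lemma \pref{lemmachiiso}. Substituting $H(t) = \exp(-\lambda t^{-\alpha})$ gives $\log H(t) = -\lambda t^{-\alpha}$, hence $1 - \log H(t) = 1 + \lambda t^{-\alpha}$, and therefore
$$F(t) = X^{-1}(H)(t) = (1 + \lambda t^{-\alpha})^{-1}.$$
Conversely, any $F$ of this form satisfies $X(F)(t) = \exp(-\lambda t^{-\alpha})$ upon applying $X$ (equivalently, by inverting the computation just performed), so by the theorem it is Boolean max-stable. This settles the equivalence in both directions.

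There is no genuine obstacle here; the only points requiring a moment's care are bookkeeping ones. One should check that the parameters $\lambda, \alpha > 0$ in the theorem correspond to the same ranges in the corollary, which is immediate since $X^{-1}$ does not touch them. One should also confirm that $F(t) = (1 + \lambda t^{-\alpha})^{-1}$ genuinely lies in $\Delta_+$, i.e.\ that it is increasing and right continuous with $F(t) \to 1$ as $t \uparrow \infty$ and $F(t)\to 0$ as $t \downarrow 0$; but this is automatic, since $X^{-1}$ maps $\Delta_+$ into $\Delta_+$, and can in any case be read off directly from the formula. The corollary is therefore just the theorem expressed through the explicit inverse of the semigroup isomorphism $\chi$.
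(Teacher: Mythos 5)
Your proposal is correct and is exactly the paper's argument: the paper proves this corollary simply by applying the explicit inverse $X^{-1}(H) = (1-\log H)^{-1}$ to the family $H(t) = \exp(-\lambda t^{-\alpha})$ from the preceding theorem, which is precisely your computation. The only superfluous detail is your check that $F(t)\to 0$ as $t\downarrow 0$, which is not part of the definition of $\Delta_+$ and is not needed.
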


The distribution appearing in Corollary \ref{Dagum} is known as a Dagum distribution or log-logistic distribution. Among other applications, the Dagum distribution is used in models of wealth distribution. 

The transfer between classical and Boolean max-stable laws, which we fortunately posses, also gives results on domains of attraction. 

\begin{definition}
A distribution function $G\in \Delta_+$ is in the Boolean-max domain of attraction of $F\in \Delta_+$ if for some sequence of $a_n > 0$, $n\in \mathbb{N}$ we have $G^n(a_n \cdot) \to F(\cdot)$ as $n\to \infty$. 
\end{definition}

The classical correspondents of the Dagum laws (viewed as Boolean max-stable laws) are Fr\'echet laws and their classical domains of attraction are known without using shifts (see Prop. 1. 11. in \cite{Resnick}, a classical result of Gnedenko). The condition for a distribution function $K \in \Delta_+$, to be in the domain of attraction of a Fr\'echet distribution, is that $1-K$ has to be regularly varying of exponent $-\alpha$, that is 
$$\lim_{t\to +\infty} \frac{1-K(tx)}{1-K(t)} = x^{-\alpha},$$
for all $x > 0$. This gives immediately the following result. 

\begin{corollary}
$G\in \Delta_+$ is in the Boolean domain of attraction of $F(t) = (1+t^{-\alpha})^{-1}$ if and only if $1-\exp(1-G^{-1})$ is regularly varying of exponent $-\alpha$. 
\end{corollary}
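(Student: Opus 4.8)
The plan is to push the entire statement through the semigroup isomorphism $X\colon\Delta_+\to\Delta_+$, $X(F)=\chi\circ F=\exp(1-F^{-1})$, thereby reducing the Boolean domain-of-attraction question to the already-quoted classical one for Fr\'echet laws. First I would record that the target law transforms correctly: since $F(t)=(1+t^{-\alpha})^{-1}$ has $F^{-1}=1+t^{-\alpha}$, we get $X(F)(t)=\exp(1-F^{-1}(t))=\exp(-t^{-\alpha})$, which is exactly the standard Fr\'echet distribution $\Phi_\alpha$ of exponent $\alpha$ (the $\lambda=1$ case of the Theorem).

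Next I would translate the hypothesis. By definition $G$ lies in the Boolean domain of attraction of $F$ iff there exist $a_n>0$ such that the $n$-fold Boolean max-convolution $G\maxbool\cdots\maxbool G$, rescaled by $a_n$, converges pointwise to $F$. Applying $X$ and using its three structural properties from Section~4 --- that it is a bijection preserving pointwise convergence, that it intertwines $\maxbool$ with ordinary multiplication (so that $X$ of the $n$-fold $\maxbool$-power equals the ordinary $n$-th power of $X(G)$), and that it commutes with the dilations $t\mapsto at$ --- this convergence is equivalent to $(X(G))^n(a_n\cdot)\to X(F)(\cdot)$. Equivalence in \emph{both} directions is guaranteed because $\chi$ is a homeomorphism of $[0,1]$, so composition with $\chi$ preserves and reflects pointwise limits. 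Hence $G$ is in the Boolean domain of attraction of $F$ if and only if $K:=X(G)=\exp(1-G^{-1})$ is in the classical max-domain of attraction of $\Phi_\alpha$.

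Finally I would invoke the classical Gnedenko criterion (Prop.~1.11 in \cite{Resnick}) recalled just above the statement: $K\in\Delta_+$ lies in the classical domain of attraction of $\Phi_\alpha$ exactly when $1-K$ is regularly varying of exponent $-\alpha$. Substituting $K=\exp(1-G^{-1})$ gives precisely the asserted condition, namely that $1-\exp(1-G^{-1})$ be regularly varying of exponent $-\alpha$.

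I do not expect a genuine obstacle, since the content has been front-loaded into the isomorphism $X$ and into the cited classical theorem. The only points requiring care are verifying that $X(F)$ is the \emph{standard} Fr\'echet law (matching the specific normalization $F(t)=(1+t^{-\alpha})^{-1}$), and confirming that the classical criterion is available in shift-free form --- both of which hold because Fr\'echet attraction within $\Delta_+$ needs no centering, consistent with the Section~4 convention of omitting shifts in the Boolean limit process.
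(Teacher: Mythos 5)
Your proposal is correct and is essentially the paper's own argument: the paper derives this corollary "immediately" from the transfer isomorphism $X(F)=\exp(1-F^{-1})$ combined with Gnedenko's classical criterion (Prop.~1.11 in Resnick) for the Fr\'echet domain of attraction, which is exactly the reduction you carry out. Your explicit verification that $X$ maps the Dagum law $(1+t^{-\alpha})^{-1}$ to the standard Fr\'echet law $\exp(-t^{-\alpha})$, and that $X$ preserves convergence, powers, and dilations in both directions, simply spells out the details the paper leaves implicit.
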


The condition on $G$ in the corollary is early seen to be equivalent to $1-G$ being regularly varying of index $-\alpha$. Indeed, since
$$\lim_{t\to \infty} \left(1-(G(t))^{-1} \right) = \lim_{t\to \infty} \left( 1- (G(tx))^{-1} \right) =0 $$
and $\lim_{y\to 0} (\exp y-1) / y =1$, the condition on $G$ is equivalent to
$$\lim_{t\to \infty} \frac{1-(G(tx))^{-1}}{1-(G(t))^{-1}}= x^{-\alpha}.$$
Thus, like shown in \cite{Arous-Voiculescu} for free max-convolution, also for Boolean max-convolution, a Bercovici-Pata type coincidence of the domains of attraction with the classical domains of attractions takes place. 

\begin{corollary}
A distribution function $F$ is in the max-Boolean domain of attraction of the Dagum distribution function $(1+t^{-\alpha})^{-1}$ if and only if $1-F$ is regulary varying of index $-\alpha$.  
\end{corollary}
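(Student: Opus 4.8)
The plan is to derive this corollary directly from the preceding one by replacing its condition on $X(F)$ with the cleaner condition on $F$ itself. The preceding corollary tells us that $F$ lies in the Boolean domain of attraction of $(1+t^{-\alpha})^{-1}$ exactly when $1 - X(F)$ is regularly varying of index $-\alpha$, where $X(F)(t) = \exp(1 - (F(t))^{-1})$ with $(F(t))^{-1}$ the pointwise reciprocal. So I would reduce the entire statement to showing that $1 - X(F)$ is regularly varying of index $-\alpha$ if and only if $1 - F$ is.

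First I would record the tail asymptotics. Because $F \in \Delta_+$, we have $F(t) \to 1$ and hence $(F(t))^{-1} - 1 \to 0$ as $t \to \infty$. Setting $u(t) = (F(t))^{-1} - 1 = (1 - F(t))/F(t)$, the elementary limit $\lim_{u \to 0}(e^u - 1)/u = 1$ (already invoked in the paragraph before the statement) would let me write
$$1 - X(F)(t) = 1 - e^{-u(t)} \sim u(t) = \frac{1 - F(t)}{F(t)} \sim 1 - F(t),$$
the final step using $F(t) \to 1$. This establishes that $1 - X(F)$ and $1 - F$ are asymptotically equivalent as $t \to \infty$.

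The last step is to pass from this asymptotic equivalence to the equivalence of the two regular-variation conditions. I would invoke the standard closure property: if $g \sim h$ at infinity, then for each fixed $x > 0$ the ratios $g(tx)/g(t)$ and $h(tx)/h(t)$ have the same limit, so one function is regularly varying of index $\rho$ precisely when the other is. Taking $g = 1 - X(F)$, $h = 1 - F$, and $\rho = -\alpha$ gives the claim. I do not expect a genuine obstacle here: the only points needing care are keeping track of the two asymptotic equivalences and recalling that regular variation is preserved under asymptotic equivalence. Indeed, this is exactly the computation sketched in the paragraph immediately preceding the statement, so the corollary is really a repackaging of the preceding one, now phrased intrinsically in terms of $F$.
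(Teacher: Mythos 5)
Your proposal is correct and follows essentially the same route as the paper: the paper's own justification (given in the paragraph preceding the corollary) likewise reduces to the preceding corollary and uses $\lim_{y\to 0}(\exp y - 1)/y = 1$ together with $F(t)\to 1$ to show $1 - \exp(1-(F(t))^{-1}) \sim (F(t))^{-1}-1 \sim 1-F(t)$, hence the two regular-variation conditions coincide. Your explicit appeal to the preservation of regular variation under asymptotic equivalence just makes precise the step the paper leaves implicit.
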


\end{document}